\theoremstyle{plain}
\newtheorem{theorem}{Theorem}
\newtheorem{prop}[theorem]{Proposition}
\newtheorem{lemma}[theorem]{Lemma}
\newtheorem{fact}[theorem]{Fact}
\theoremstyle{definition}
\newtheorem{remark}{Remark}
\newcommand{\ts}{\hspace{0.5pt}}
\newcommand{\nts}{\hspace{-0.5pt}}
\newcommand{\CC}{\mathbb{C}\ts}
\newcommand{\RR}{\mathbb{R}\ts}
\newcommand{\ZZ}{\mathbb{Z}}
\newcommand{\SSS}{\mathbb{S}}
\newcommand{\NN}{\mathbb{N}}
\newcommand{\XX}{\mathbb{X}}
\newcommand{\YY}{\mathbb{Y}}
\newcommand{\cB}{\mathcal{B}}
\newcommand{\cC}{\mathcal{C}}
\newcommand{\cH}{\mathcal{H}}
\newcommand{\cP}{\mathcal{P}}
\newcommand{\gL}{\varLambda}
\newcommand{\dd}{\, \mathrm{d}}
\begin{document}

\title[Close-packed dimers on the line]
{Close-packed dimers on the line: \\[2mm]
diffraction versus dynamical spectrum}

\author{Michael Baake}
\address{Fakult\"at f\"ur Mathematik, Universit\"at Bielefeld, \newline
\hspace*{\parindent}Postfach 100131, 33501 Bielefeld, Germany}
\email{mbaake@math.uni-bielefeld.de}

\author{Aernout van Enter}
\address{Johann Bernoulli Institute for Mathematics and
  Computer Science, \newline
 \hspace*{\parindent}University of Groningen, 
  PO Box 407, 9700\ts AK Groningen, The Netherlands}
\email{a.c.d.van.enter@rug.nl}

\begin{abstract}
  The translation action of $\RR^{d}$ on a translation bounded measure
  $\omega$ leads to an interesting class of dynamical systems, with a
  rather rich spectral theory.  In general, the diffraction spectrum
  of $\omega$, which is the carrier of the diffraction measure, lives
  on a subset of the dynamical spectrum. It is known that, under some
  mild assumptions, a pure point diffraction spectrum implies a pure
  point dynamical spectrum (the opposite implication always being
  true). For other systems, the diffraction spectrum can be a proper
  subset of the dynamical spectrum, as was pointed out for the
  Thue-Morse sequence (with singular continuous diffraction) in
  \cite{EM}.  Here, we construct a random system of close-packed
  dimers on the line that have some underlying long-range periodic
  order as well, and display the same type of phenomenon for a system
  with absolutely continuous spectrum. An interpretation in terms of
  `atomic' versus `molecular' spectrum suggests a way to come to a
  more general correspondence between these two types of spectra.
\end{abstract}

\maketitle

\section{Introduction}

It is well-known \cite{LMS,Martin,BL,LS} that pure point diffraction
and pure point dynamical spectrum, under some mild assumptions, are
equivalent properties of dynamical systems of translation bounded
measures on $d$-space.  This type of equivalence does not extend to
systems with continuous spectrum, as the example of the Thue-Morse
sequences shows \cite{EM}. The corresponding dynamical system can be
defined via the primitive $2$-letter substitution rule $a\mapsto ab$,
$b\mapsto ba$. It supports a unique shift-invariant probability
measure.  The diffraction spectrum (for the associated Dirac combs
with balanced weights $\pm 1$) is purely singular continuous, while
the dynamical spectrum has a non-trivial pure point part in form of
the dyadic rationals. This spectral information is not reflected in
the diffraction spectrum, no matter whether one works with balanced or
general weights.

However, this `missing' part can be extracted from the diffraction of
a factor of the Thue-Morse system, the so-called period doubling
sequences, which are Toeplitz sequences (the corresponding system can
be defined by the substitution rule $a\mapsto ab$, $b\mapsto aa$).
For a discussion of the dynamical spectra of both systems, we refer to
\cite{Q}, while the discrepancy with the diffraction spectrum was
noticed in \cite{EM}. The diffraction spectrum of the Thue-Morse and
period doubling systems (in modern terminology) is discussed in detail
in \cite{BG-TM,BG-PD}; see also the references given there for
previous work.

Below, we discuss a simple system that displays a similar phenomenon
in the presence of absolutely continuous diffraction.  We employ a
one-dimensional caricature of a system of dimeric molecules. It has an
absolutely continuous diffraction spectrum and an extra point in the
dynamical spectrum (for the $\ZZ$-action of the discrete shift), which 
is due to the presence of a superstructure
with periodic long-range order. One can recover this extra spectral
information by considering the diffraction of a \emph{factor} of the
original system. Our example illustrates the distinction between
dynamical and diffraction spectrum in a particularly simple manner.

Let us begin by briefly summarising the basic notions and concepts
from diffraction theory and dynamical systems (we refer to
\cite{Q,Robbie} and references therein for background on the dynamical
systems used here). In our context, it is best to use a
measure-theoretic setting for the systems under study, where we rely
on the Riesz-Markov representation theorem to identify regular Borel
measures on $\RR^{d}$ with (continuous) linear functionals on the
space $C_{\mathsf{c}} (\RR^{d})$ of continuous functions on $\RR^{d}$
with compact support. In particular, we consider unbounded, complex
measures $\omega$ on Euclidean space $\RR^{d}$ that are translation
bounded, which means that, for each compact set $K\subset \RR^{d}$, we
have $\sup_{t\in\RR^{d}} \lvert \omega \rvert (t+K) < \infty$; see
\cite{BF} for background.  Such an $\omega$ describes the realisation
of an infinite system, be it a crystal, a quasicrystal or a more
general object.

Given such an $\omega$, let $\widetilde{\omega}$ be obtained from
$\omega$ by reflection in the origin followed by complex conjugation,
so that $\widetilde{\omega} (g) = \overline{\omega (\widetilde{g})}$
for any continuous function $g$ of compact support, where
$\widetilde{g}$ is defined via $\widetilde{g} (x) = \overline{g
  (-x)}$. Given $\omega$, the corresponding autocorrelation measure
$\gamma = \gamma^{}_{\omega}$, or \emph{autocorrelation} for short, is
defined as the volume-averaged (or Eberlein \cite{Arga}) convolution
\begin{equation} \label{eq:def-auto}
   \gamma \, = \, \omega \circledast \widetilde{\omega}
    \, := \lim_{R\to\infty} \frac{\omega|^{}_{R} * 
      \widetilde{\omega|^{}_{R}}} {\mathrm{vol} 
      \bigl(B_{R} (0) \bigr)}\ts ,
\end{equation}
where $B_{R} (x)$ is the open ball of radius $R$ and centre $x$,
while $\omega|^{}_{R}$ denotes the restriction of $\omega$ to the
ball $B_{R} (0)$. The limit in Eq.~\eqref{eq:def-auto} is taken in the
vague topology, and will exist in all examples considered later,
at least almost surely in the probabilistic sense; for some general
results, see \cite{Hof}.

The measure $\gamma$ is positive definite by construction, and hence
Fourier transformable. This gives $\widehat{\gamma}$, the diffraction
measure or \emph{diffraction} for short, which is a positive measure
on $\RR^{d}$ that describes the outcome of kinematic diffraction
from $\omega$; see \cite{Cowley} for background and physical
applications. The diffraction has a unique decomposition as
\begin{equation} \label{eq:decomp}
   \widehat{\gamma} = \bigl(\widehat{\gamma}\bigr)_{\mathsf{pp}}
     + \bigl(\widehat{\gamma}\bigr)_{\mathsf{sc}}
     + \bigl(\widehat{\gamma}\bigr)_{\mathsf{ac}}
\end{equation}
into its pure point, singular continuous and absolutely continuous
parts, where the decomposition of the continuous part is relative to
Lebesgue measure. This is the Haar measure on $\RR^{d}$ and also the
right reference measure from the physical applications point of view.

A measure $\omega$ is called \emph{pure point diffractive} when the
corresponding diffraction measure satisfies $\widehat{\gamma} =
\bigl(\widehat{\gamma}\bigr)_{\mathsf{pp}}$, and similar definitions
apply to the other spectral components. Important examples for pure
point diffractive systems are perfect crystals and model sets
\cite{Hof,BM,BLM}, while the Thue-Morse sequence or the Rudin-Shapiro
sequence, both with balanced weights, are paradigms for systems with
purely singular continuous or purely absolutely continuous diffraction
spectra; see \cite{EM,BG-TM,R,S,HB,BG} and references therein for
more. Here and below, a point set $\gL\subset\RR^{d}$ is considered as
a measure on $\RR^{d}$ via its Dirac comb $\delta_{\gL} :=
\sum_{x\in\gL} \delta_{x}$, and a sequence $(w_{n})^{}_{n\in\ZZ}$ as a
measure on $\ZZ$ or on $\RR$ (or both) via the weighted Dirac comb
$\omega = \sum_{n\in\ZZ} w_{n}\ts \delta_{n}$, where $\delta_{x}$ is
the normalised point (or Dirac) measure at $x$.

In general, it is not adequate to restrict the attention to a single
measure $\omega$.  Equally relevant are translates of it, written as
$\delta_{t} * \omega$, or any other measure that can be approximated
arbitrarily well (in the vague topology) by such translates. Thus, one
defines the \emph{hull} $\XX_{\omega}$ of $\omega$ as its vague orbit
closure under the action of $\RR^{d}$,
\[
    \XX_{\omega} = \overline{ \{ \delta_{t} * \omega \mid 
    t \in \RR^{d} \} } \ts .
\]
It is clear that the $\RR^{d}$-action is continuous on $\XX_{\omega}$,
so that $( \XX_{\omega}, \RR^{d} )$ is a topological dynamical system.
Since $\omega$ is assumed to be translation bounded, the hull
$\XX_{\omega}$ is compact in the vague topology \cite{Martin}.
More generally, we will consider a compact space $\XX$ that contains
the orbit closure $\XX_{\omega}$ and emerges as the ensemble of
possible realisations of an ergodic stochastic process.

We equip $\XX$ with a translation invariant probability measure $\mu$
(which exists by standard arguments),
and consider the (measure-theoretic) dynamical system $( \XX, \cB, \mu
)$, where $\cB$ is the standard Borel $\sigma$-algebra induced by the
vague topology; see \cite{DGS} for background. The measure $\mu$ also
permits to consider $(\XX,\mu)$ as a stochastic process
\cite{EM,G,BBM}, as will be done below, too.  The action of $\RR^{d}$
now induces a unitary action on the Hilbert space $\mathrm{L}^{2}
(\XX, \mu)$. If the simultaneous eigenfunctions of the generators of
the $\RR^{d}$-action span $\mathrm{L}^{2} (\XX,\mu)$, one speaks of
\emph{pure point dynamical spectrum}. In general, as before, one can
have different spectral types, and one interesting question is the
relation between the diffraction and the dynamical spectrum.

What follows, is an attempt to improve this situation by way of some
guiding examples. Our focus will be on systems with some absolutely
continuous spectrum, as the interest in them has recently been on the
increase \cite{Wit}. Afterwards, we summarise some general observations
and formulate a more systematic connection between diffraction and
dynamical spectrum.

\section{A periodic toy model}\label{sec:toy}

All our examples below are built on closed subsets of $\{ \pm
1\}^{\ZZ}$, which is compact in the obvious product topology. Let us
begin with a quick glance at the set
\begin{equation} \label{eq:toy-def}
   \XX_{0} = \{ \ldots - + - | + - +  \ldots \, , \,
       \ldots + - + | - + - \ldots \}
\end{equation}
that consists of the two possible (truly) $2$-periodic sequences
within $\{ \pm 1\}^{\ZZ}$. Here and below, we use the shorthand $\pm$
for $\pm 1$ and write a bi-infinite sequence as $w = \ldots w^{}_{-2}
w^{}_{-1} | w^{}_{0} w^{}_{1} \ldots$, where $|$ denotes the
origin. Giving each element of $\ts\XX_{0}$ probability $1/2$ defines
$\mu^{}_{0}$, the only possible invariant probability measure on
$\XX_{0}$, which is thus ergodic. The corresponding dynamical system
$\bigl(\XX^{}_{0},\cP(\XX^{}_{0}),\mu^{}_{0}\bigr)$, with $\cP(A)$
denoting the power set of $A$, is clearly minimal, hence strictly
ergodic; compare \cite{W} for background.

\begin{remark} At this stage, we only consider the $\ZZ$-action
  induced by the discrete shift operator. Its suspension into a
  dynamical system under the action of the full translation group
  $\RR$ can later be added as a second step. It is trivial in the
  sense that one only sees the structure of the unit circle $\SSS$ in
  addition, in line with $\RR / \ZZ \simeq \SSS$. In contrast, the
  diffraction measures below are always measures on $\RR$.
\end{remark}

Considering any $ w \in \XX_{0}$ and turning it into a Dirac comb 
(viewed as a tempered measure on $\RR$) via
\[
   \omega \, = \sum_{m\in\ZZ} h^{}_{w(m)} \ts \delta_{m}
\]
with arbitrary complex weights $h^{}_{\pm}$, one quickly checks by
routine calculations that the corresponding autocorrelation in both
cases reads
\[
   \gamma \, = \,  \frac{\lvert h_{+} + h_{-} \rvert^{2}}{4} \, 
   \delta^{}_{\ZZ} + \frac{\lvert h_{+} - h_{-} \rvert^{2}}{4} 
   \ts \bigl( \delta^{}_{2\ZZ} - \delta^{}_{2\ZZ + 1} \bigr),
\]
with diffraction measure
\begin{equation}\label{eq:PSF}
   \widehat{\gamma} \, = \, \frac{\lvert h_{+} \nts + h_{-} \rvert^{2}}{4}
   \, \delta^{}_{\ZZ} + \frac{\lvert h_{+} \nts - h_{-} \rvert^{2}}{4}
   \, \delta^{}_{(2\ZZ+1)/2}\ts .
\end{equation}
Note that $\gamma$ and $\widehat{\gamma}$ are to be understood as
measures on $\RR$.
The last formula follows from an application of the Poisson summation
formula for lattice Dirac combs \cite{BM}, which reads
\[
     \widehat{\delta^{}_{\!\varGamma}} \, = \,
     \mathrm{dens} (\varGamma) \, \delta^{}_{\!\varGamma^{*}}
\]
for a lattice $\varGamma$ and its dual lattice $\varGamma^{*}$.
As expected for a periodic structure, $\widehat{\gamma}$ is a pure
point measure.  Since the original measure $\omega$ is supported on
$\ZZ$, its diffraction is $1$-periodic \cite{B}. The $2$-periodicity
of $\omega$ in turn results in $\ZZ/2$ as the support of
$\widehat{\gamma}$.  The latter can alternatively be written as
\[
    \widehat{\gamma} \, = \, \frac{1}{4} \Bigl(
    \lvert h_{+} \nts + h_{-}\rvert^{2}\, \delta^{}_{0} +
    \lvert h_{+} \nts - h_{-}\rvert^{2}\, \delta^{}_{1/2} \Bigr)
    * \delta^{}_{\ZZ}\ts ,
\]
which illustrates both aspects. In particular, when $h^{}_{\pm}=\pm 1$,
one obtains $\widehat{\gamma} = \delta^{}_{\ZZ + \frac{1}{2}}$.

Let us compare this with the dynamical spectrum (under the
$\ZZ$-action of the shift). Here, we clearly have $\mathrm{L}^{2}
(\XX^{}_{0}, \mu^{}_{0}) \simeq \CC^{2}$, and there are two
eigenfunctions, $f\equiv 1$ (for eigenvalue $1$) and $g$ (for
eigenvalue $-1$), the latter defined by $w\mapsto g(w) = w^{}_{0}$.
Together, they form an orthonormal basis of $\CC^{2}$, relative to the
inner product $\langle f | g \rangle = \int_{\XX_{0}} \bar{f} g \dd
\mu$. So, this is a simple example where one clearly sees how pure
point diffraction spectrum and pure point dynamical spectrum fit
together (and are, in fact, equivalent \cite{LMS,BL,LS}). We will
explain this in more detail later on.

\section{Close-packed dimers on the line with random 
orientation}\label{sec:dimers}

Consider close-packed dimers on the integers, at this stage viewed as
empty boxes of length $2$ that cover $\ZZ$ without overlaps or gaps.
There are two possible configurations, which are distinguished by the
central box (the latter either occupying the positions $0$ and $1$, or
$-1$ and $0$). Let us now fill the boxes randomly with dimeric
`molecules', by distributing weights $\pm 1$ such that each dimer
carries a $1$ and a $-1$, but in random order (or orientation).  So,
each box is then either $(+,-)$ or $(-,+)$. The result is a sequence
in $\{ \pm 1 \}^{\ZZ}$, where we now disregard the boxes again (they
can always be reconstructed from a given sequence unless it is one of
the two periodic sequences from $\XX_{0}$).

The ensemble of all dimeric sequences as described above forms a
closed and compact shift space $\XX$, with a continuous action of the
group $\ZZ$ via the usual shift operation. We call it the
\emph{dimeric molecule shift}, or DMS for short.  More precisely, the
shift $S \! : \, \XX \longrightarrow \XX$, defined by $(Sw)_{n} =
w_{n+1}$, is a continuous automorphism on $\XX$ and generates 
an action of $\ZZ$. We will come back to this point of view shortly.

Consider a sequence $w\in\XX$ and form the corresponding weighted Dirac
comb
\begin{equation} \label{eq:bal-comb}
    \omega \, = \, w \, \delta^{}_{\ZZ}
    \, := \sum_{m\in\ZZ} w_{m} \delta_{m} \ts ,
\end{equation}
which is a translation bounded (signed) measure on $\ZZ$ (and also on
$\RR$, via the canonical embedding of $\ZZ$ in $\RR$). The
corresponding autocorrelation measure (if it exists) is of the form
$\gamma = \sum_{n\in\ZZ} \eta(n) \ts \delta_{n}$, where the
coefficients are given by the limits
\begin{equation} \label{eq:eta-coeff}
   \eta(n)\, = \lim_{N\to\infty} \frac{1}{2N+1}
   \sum_{m=-N}^{N} \overline{w_{m}}\, w_{m+n} \, ,
\end{equation}
provided the latter exist.

\begin{lemma} \label{lem:dimer-auto}
  For any $n\in\ZZ$, the autocorrelation coefficient $\eta(n)$
  of the close-packed dimer model on $\ZZ$ with random orientation
  is given by
\[
    \eta (n) = \begin{cases} 1, & n=0 \\
       -\frac{1}{2}, & n=\pm 1 \\
       0, & \text{otherwise}. \end{cases}
\]
  Consequently, the autocorrelation is\/ $\gamma = \delta^{}_{0}
  - \frac{1}{2} (\delta^{}_{1} + \delta^{}_{-1})$, which applies
  to almost all realisations of the DMS process.
\end{lemma}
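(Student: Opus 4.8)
The plan is to compute the limit in Eq.~\eqref{eq:eta-coeff} by a combination of symbolic bookkeeping and an appeal to the law of large numbers. First I would encode the random structure explicitly: a DMS realisation is determined by two independent pieces of data, namely the position of the central box (a binary choice, affecting only a global shift by one) and an i.i.d.\ sequence of orientations $\varepsilon^{}_{k}\in\{\pm 1\}$, one per box, each taking the value $+1$ with probability $1/2$. Writing $w^{}_{m}$ in terms of these variables, one has, for a box starting at an even position $2k$ (after fixing the box parity), $w^{}_{2k} = \varepsilon^{}_{k}$ and $w^{}_{2k+1} = -\varepsilon^{}_{k}$. The key structural observation is that two sites lying in the \emph{same} box always carry opposite weights, while two sites in \emph{different} boxes carry weights whose product is $\varepsilon^{}_{j}\varepsilon^{}_{k}$ with $j\neq k$, hence has mean zero.

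Next I would evaluate $\eta(n)$ case by case. For $n=0$ the summand $\overline{w^{}_{m}}\,w^{}_{m} = w_m^2 = 1$ identically, so the Cesàro average is $1$ with no randomness involved. For odd $n$, every pair $(m,m+n)$ of sites at odd distance either lies in a common box or in two distinct boxes; for $n=\pm 1$ exactly half of the pairs (those $m$ that are the left site of a box) lie in a common box and contribute $-1$, while the other half lie in adjacent boxes and contribute $\varepsilon^{}_{j}\varepsilon^{}_{j\pm 1}$, a zero-mean term. For $|n|\geq 3$ odd, no pair lies in a common box, so every contribution is a product $\varepsilon^{}_{j}\varepsilon^{}_{k}$ with $j\neq k$. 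For even $n\neq 0$, both sites always sit in boxes of the same parity slot within their respective boxes, and again $j\neq k$, so the product is $\varepsilon^{}_{j}\varepsilon^{}_{k}$ with distinct indices. In all of these cases the Cesàro average of the zero-mean contributions tends to $0$ almost surely: the relevant partial sums are, after grouping, sums of i.i.d.\ mean-zero bounded terms (or, for the $n=\pm 1$ case, a single deterministic $-\tfrac12$ plus such a sum), so Kolmogorov's strong law of large numbers applies. Collecting the cases gives $\eta(0)=1$, $\eta(\pm 1)=-\tfrac12$, and $\eta(n)=0$ otherwise, on a set of realisations of full $\mu$-measure.

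The main obstacle, such as it is, is purely bookkeeping: one must be careful that the two parity choices for the box tiling and the finite-window truncation in Eq.~\eqref{eq:eta-coeff} do not interfere with the limit. Concretely, in the window $[-N,N]$ the index $m$ ranges over roughly $2N$ boxes, and the $O(1)$ boundary effects from boxes only partially inside the window are washed out by the normalisation $1/(2N+1)$; likewise, shifting the whole configuration by one to switch the central-box position changes each partial sum by $O(1)$ and hence does not affect the limit. Once these are dispatched, the almost-sure statement follows uniformly over the two box-parity sectors, and the final formula $\gamma = \delta^{}_{0} - \tfrac12(\delta^{}_{1}+\delta^{}_{-1})$ is immediate. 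A minor point worth a remark is that the exceptional null set one discards here does not in fact include the two periodic sequences of $\XX_{0}$; for those, $\eta(n)$ is easily seen to take a different value, consistent with the fact that they form a $\mu$-null subset of $\XX$.
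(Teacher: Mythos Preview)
Your argument is essentially the paper's own: explicitly parametrise a realisation by the i.i.d.\ box orientations $\varepsilon_k$, get $\eta(0)=1$ trivially, split the $n=\pm 1$ sum into the deterministic within-box part (giving $-\tfrac12$) and a random cross-box part, and for $|n|\ge 2$ observe that every product $w_m w_{m+n}$ reduces to $\pm\varepsilon_j\varepsilon_k$ with $j\ne k$ and hence has mean zero. The paper phrases the $|n|\ge 2$ case slightly differently (noting directly that $w_m$ and $w_{m+n}$ are independent and splitting the sum into subsums), but the substance is the same.

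There is one technical slip worth fixing. You claim that ``after grouping'' the cross-box contributions form an i.i.d.\ sequence and then invoke Kolmogorov's SLLN. But the natural sequence of cross-box products, e.g.\ $(\varepsilon_k\varepsilon_{k+1})_{k\in\ZZ}$ for $n=1$ or $n=2$, is \emph{not} mutually independent (the product $\varepsilon_0\varepsilon_1\cdot\varepsilon_1\varepsilon_2\cdot\varepsilon_0\varepsilon_2=1$ deterministically); it is only pairwise independent. The paper handles exactly this point by citing Etemadi's SLLN, which requires only pairwise independence of identically distributed summands. Alternatively, for each fixed $n$ you can split $m$ into sufficiently many residue classes so that the $\varepsilon$-indices occurring in different terms become disjoint, and then Kolmogorov does apply; but this needs to be said. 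As written, your ``i.i.d.'' claim is not justified.
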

\begin{proof}
  The process is clearly stationary and ergodic, so that we can
  determine $\eta$ from a typical realisation. One has $\eta(0)=1$ for
  every realisation, and $\eta(n) = \eta(-n)$ for $n\in\ZZ$ is clear
  whenever one of the coefficients exist. So, it remains to show
  the claim for $n\in\NN$.

  Since the sequences with all dimers in the same orientation form a
  null set, we may assume that at least one position $i\in\ZZ$ exists
  such that $w_{i} = w_{i+1}$.  In a typical realisation, we
  have infinitely many such positions in the sequence, and they are
  either all even or all odd. We will use this structure implicitly
  in what follows.

  When $n\ge 2$, the values $w_{m}$ and $w_{m+n}$ are independent.
  Moreover, the sum in Eq.~\eqref{eq:eta-coeff} can be split into four
  sums, each of which is a sum over i.i.d.\ random variables of
  Bernoulli type. The strong law of large numbers (SLLN; see \cite{Ete}
  for instance for a formulation with the slightly weaker assumption of 
  mere pairwise independence, which will come in handy later on)
  then tells us that each contribution almost surely vanishes, so that
  $\eta(n)=0$ in this case.
  
  In the remaining case ($n=1$), every second term is $-1$ due to
  the structure of the dimers, which sums to $-\frac{1}{2}$. The
  remaining terms are the ones that cross the dimer boundaries,
  hence contribute $1$ or $-1$ with equal probability and thus
  (almost surely) do not contribute to the overall sum (again by
  the SLLN). This gives $\eta(1)=-\frac{1}{2}$ and the proof is
  complete.
\end{proof}

The corresponding diffraction follows by a straight-forward
calculation.

\begin{prop} \label{prop:bal-diff}
  The diffraction of the DMS model is given by
\[ 
    \widehat{\gamma} = ( 1 - c ) \lambda
\]
  with $c(k) = \cos(2 \pi k)$, so that the measure $\widehat{\gamma}$
  is absolutely continuous with respect to Lebesgue measure $\lambda$.
  \qed
\end{prop}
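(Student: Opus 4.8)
The plan is to obtain $\widehat{\gamma}$ by a direct Fourier transform of the autocorrelation supplied by Lemma~\ref{lem:dimer-auto}, since all probabilistic content has already been dealt with there. By that lemma, $\gamma = \delta^{}_{0} - \frac{1}{2}(\delta^{}_{1} + \delta^{}_{-1})$ holds for almost every realisation, so $\gamma$ is a finite (hence translation bounded and Fourier transformable) measure on $\RR$ with support $\{-1,0,1\}$. First I would recall that, with the Fourier conventions fixed by the Poisson summation formula used in Section~\ref{sec:toy}, the transform of a point measure $\delta^{}_{a}$ is the absolutely continuous measure with density $k\mapsto e^{-2\pi\mathrm{i} a k}$ relative to $\lambda$ (the sign of the exponent being immaterial here, as $\gamma$ is symmetric). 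Transforming term by term then gives
\[
   \widehat{\gamma} \, = \, \Bigl( 1 - \tfrac{1}{2}\ts e^{2\pi\mathrm{i} k}
   - \tfrac{1}{2}\ts e^{-2\pi\mathrm{i} k} \Bigr) \lambda
   \, = \, \bigl( 1 - \cos(2 \pi k) \bigr) \lambda \, = \, (1-c)\, \lambda \ts ,
\]
which is precisely the claimed identity, with $c(k)=\cos(2\pi k)$.

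It remains to record the qualitative conclusions. The density $k\mapsto 1-c(k)$ is a bounded, continuous, non-negative and $1$-periodic function, so $(1-c)\ts\lambda$ is a positive, translation bounded measure that is absolutely continuous with respect to $\lambda$ and has neither a pure point nor a singular continuous component. As a consistency check on signs and positivity, I would also note the factorisation
\[
   \gamma \, = \, \tfrac{1}{2}\ts (\delta^{}_{0} - \delta^{}_{1})
   * (\delta^{}_{0} - \delta^{}_{-1})
   \, = \, \tfrac{1}{2}\ts \varrho * \widetilde{\varrho}
\]
with $\varrho = \delta^{}_{0} - \delta^{}_{1}$, which yields $\widehat{\gamma} = \frac{1}{2}\lvert 1-e^{-2\pi\mathrm{i} k}\rvert^{2}\ts\lambda$; expanding the modulus squared reproduces $1-\cos(2\pi k)$ and makes the positivity manifest.

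There is essentially no obstacle left at this point: the entire difficulty of the statement resides in Lemma~\ref{lem:dimer-auto}, and once the autocorrelation is known exactly and with finite support, the diffraction is just a finite trigonometric sum. The only points requiring a little care are the consistency of the Fourier normalisation with the rest of the paper and the positivity of $\widehat{\gamma}$, the latter being automatic since $\gamma$ is positive definite and, in any case, visible from the factorisation above. No further limiting or measure-theoretic argument beyond Lemma~\ref{lem:dimer-auto} is needed.
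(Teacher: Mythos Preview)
Your proposal is correct and follows exactly the route the paper intends: the proposition is stated with an immediate \qed, preceded only by the remark that the diffraction ``follows by a straight-forward calculation'' from the autocorrelation of Lemma~\ref{lem:dimer-auto}. Your term-by-term Fourier transform of $\gamma=\delta^{}_{0}-\tfrac{1}{2}(\delta^{}_{1}+\delta^{}_{-1})$ is precisely that calculation, and the factorisation $\gamma=\tfrac{1}{2}\varrho*\widetilde{\varrho}$ is a pleasant extra that the paper does not include.
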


This particularly simple result is due to the balanced nature
of the weights, so that
\begin{equation} \label{eq:balanced}
     \lim_{N\to\infty} \frac{1}{2N\nts +1} \sum_{n=-N}^{N} w_{n} = 0
\end{equation}
holds for all realisations of our process. Let us now consider
general weights, which we realise via the mapping $h \!:\, \{ \pm 1\}
\longrightarrow \CC$ that takes values $h_{\pm}$. Given a realisation
$w$, the new Dirac comb is then
\begin{equation} \label{eq:new-Dirac}
    \omega^{}_{h} \, = \sum_{m\in \ZZ} h(w_{m}) \, \delta_{m}
   \, = \, \frac{h_{+} + h_{-}}{2}\, \delta^{}_{\ZZ} +
      \frac{h_{+} - h_{-}}{2}\, \omega \ts , 
\end{equation}
where $\omega$ is the Dirac comb of Eq.~\eqref{eq:bal-comb}.
The new autocorrelation turns out to be
\begin{equation} \label{eq:new-auto}
   \gamma^{}_{h} = \omega^{}_{h} \circledast 
         \widetilde{\omega^{}_{h}} =
   \frac{\lvert h_{+} + h_{-}\rvert^{2}}{4}\, 
         \delta^{}_{\ZZ} +
   \frac{\lvert h_{+} - h_{-}\rvert^{2}}{4}\, \gamma
\end{equation}
with the $\gamma$ from the balanced weight case of
Lemma~\ref{lem:dimer-auto}. Note that Eq.~\eqref{eq:new-auto} 
holds almost surely (as $\gamma$ does), and rests upon  
$\delta^{}_{\ZZ}\circledast\widetilde{\delta^{}_{\ZZ}} =
\delta^{}_{\ZZ}$ together with
$\delta^{}_{\ZZ}\circledast \widetilde{\omega} = 0$ and
$\widetilde{\delta^{}_{\ZZ}} \circledast \omega = 0$. The latter two
identities are a consequence of Eq.~\eqref{eq:balanced}.

The corresponding diffraction measure $\widehat{\gamma^{}_{h}}$ can be
calculated via Fourier transform and an application of
Proposition~\ref{prop:bal-diff}.  It (almost surely) reads
\begin{equation} \label{eq:new-diff}
   \widehat{\gamma^{}_{h}} = 
   \frac{\lvert h_{+} + h_{-}\rvert^{2}}{4}\, \delta^{}_{\ZZ} +  
   \frac{\lvert h_{+} - h_{-}\rvert^{2}}{4}\,
   (1 - c) \lambda \ts ,
\end{equation}
with $c(k) = \cos (2 \pi k)$ as above.  This is a measure of mixed
type, with a pure point part and an absolutely continuous
one. However, the point part is trivial in the sense that it only
reflects the lattice support of the Dirac comb $\omega^{}_{h}$ and
thus does not carry any other relevant information on the system. This
corresponds to the trivial (constant) eigenfunction of the dynamical
spectrum, which we determine next.

\section{The DMS and its dynamical spectrum}

Let us look at the above model from the viewpoint of dynamical
systems. As before, we begin with the $\ZZ$-action of the shift operator.
Given a sequence $w\in \{ \pm 1\}^{\ZZ}$, let us first define
\begin{equation} \label{eq:borders}
   M(w) = \{ m\in\ZZ \mid w_{m} = w_{m+1} \}\ts .
\end{equation}
Note that $M(w) = \varnothing$ precisely for $w\in\XX_{0}$, with
$\XX_{0}$ from Eq.~\eqref{eq:toy-def}.  The set of all dimeric
molecule sequences from Section~\ref{sec:dimers} forms the ensemble
\begin{equation} \label{es:dms-def}
   \XX = \{ w \in \{ \pm 1 \}^{\ZZ} \mid
   M(w) \subset 2\ZZ \text{ or } M(w) \subset 2\ZZ+1 \}\ts ,
\end{equation}
which is a closed subshift (and hence a compact set). We call
a sequence even (odd) when $M(w)$ is non-empty and a subset
of $2\ZZ$ (of $2\ZZ +1$). Then, $\XX$ splits as 
\[
    \XX = \XX_{+} \,\dot{\cup} \,\ts \XX_{-} 
          \,\dot{\cup}\,\ts \XX_{0}\ts ,
\]
where $\XX_{+}$ and $\XX_{-}$ denote the closed subsets of even and odd
sequences, respectively, while $\dot{\cup}$ denotes the disjoint union
of sets.

The shift $S$ (as defined earlier) is viewed as the (continuous)
generator of the action of $\ZZ$ on $\{ \pm 1\}^{\ZZ}$.  As $\XX$ is
clearly shift-invariant, we obtain $(\XX,S)$ as a topological
dynamical system (with $\ZZ$-action). It is clear from standard
arguments that there are invariant probability measures on $\XX$.
Indeed, the underlying process highlights a natural choice for a
measure $\mu$, as is also clear from the proof of
Lemma~\ref{lem:dimer-auto}. It satisfies $\mu (\XX^{}_{0})=0$ together
with $\mu (\XX_{+})=\mu (\XX_{-})=\frac{1}{2}$. Within $\XX_{+}$, each
dimer then has equal probability to be either $(+,-)$ or $(-,+)$, so
that the corresponding cylinder sets and their measures are
well-defined.  We can now view $(\XX, \cB^{}_{\XX}, \mu)$ as a
measure-theoretic dynamical system (under the action of $\ZZ$ via the
shift $S$), where $\cB^{}_{\XX}$ is the standard Borel
$\sigma$-algebra on $\XX$.

Let us next consider the Hilbert space $\cH = \mathrm{L}^{2}
(\XX,\mu)$, with the induced action of $S$ via $U \! : \, \cH
\longrightarrow \cH$, as defined by $f \mapsto Uf$ with $Uf (w) :=
f(Sw)$. The inner product is written as
\[
   \langle f \mid g \rangle = \int_{\XX} \overline{f(w)}\, 
   g(w)  \dd \mu (w) \ts ,
\]
where $\langle Uf \mid Ug \rangle = \langle f \mid g \rangle$ holds
due to the shift invariance of $\mu$. In fact, $U$ is unitary.

The function $\varphi \equiv 1$ is an eigenfunction of $U$ with
eigenvalue $1$ as usual, but we also have an eigenfunction for the
eigenvalue $-1$, namely the one defined by
\begin{equation} \label{eq:eigenfun}
   \psi(w) = \begin{cases} 0, & \text{if } w\in \XX_{0} , \\
          \pm 1, & \text{if } w\in \XX_{\pm}, \end{cases}
\end{equation}
which is well-defined because $\XX_{0}$, $\XX_{+}$ and $\XX_{-}$ are
measurable sets. Clearly, $\langle \varphi \mid \psi \rangle = 0$,
since $\mu(\XX_{+}) = \mu (\XX_{-}) = \frac{1}{2}$, while $\psi^{2} =
\varphi$ holds $\mu$-almost everywhere. Note also that $\psi$ can be
written as a limit via
\[
    \psi (w) \, = \lim_{N\to\infty} \frac{2}{2N+1}
    \sum_{n=-N}^{N} (-1)^{n}\ts  w_{n} \, w_{n+1} \ts ,
\]
which exists for $\mu$-almost all $w\in\XX$ by an SLLN argument  
analogous to that used in the proof of Lemma~\ref{lem:dimer-auto}.

So far, we know that the $\CC$-span of $\varphi$ and $\psi$ is
contained in $\cH_{\mathsf{pp}}$.  To see that we actually have equality
here, consider the double shift $S^{2}$ on $\XX$. First, for any $w\in
\XX$, the entries $w_{n}$ and $w_{n+2}$ are independent, so that, for
any fixed $i\in \ZZ$, the sequence $(S^{2n}w)_{i}$ with $n\in\ZZ$ is a
coin tossing sequence. Consequently, $S^{2}$ must comprise a spectrum
of countable Lebesgue type. On the other hand, for any given $w$, the
sequences with indices $i$ and $i+1$ are dependent, which is reflected
by the fact that $\XX$ is not minimal for the action of $S^{2}$, Here,
$\XX_{0}$, $\XX_{+}$ and $\XX_{-}$ are the non-trivial invariant
subspaces. They lead to two eigenfunctions of $S^{2}$ with eigenvalue
$1$, namely the characteristic functions $\mathbf{1}_{+}$ and
$\mathbf{1}_{-}$ (on $\XX_{+}$ and $\XX_{-}$), with $\varphi
=\mathbf{1}_{+} + \mathbf{1}_{-}$ and $\psi= \mathbf{1}_{+} -
\mathbf{1}_{-}$ (both holding $\mu$-almost everywhere). Our
system is one-dependent in the sense of \cite{AGKV}, and can be
viewed as the average of two Bernoulli shifts. Consequently,
$1$ is the only eigenvalue of $S^{2}$, and no singular continuous
contribution exists. The spectral theorem now tells us 
that $S$ has precisely the two eigenfunctions constructed above and 
only absolutely continuous spectrum otherwise. In particular, we have 
no freedom for singular continuous components.

Let us now expand on the continuous part of the spectrum.
To this end, we consider the function defined by
$w\mapsto \sigma^{}_{n} (w) = w_{n} + w_{n+1}$, which is continuous
on $\XX$ and hence measurable. A short calculation reveals that
\[
    \langle \varphi \mid \sigma^{}_{n} \rangle =
    \langle \psi \mid \sigma^{}_{n} \rangle = 0\ts ,
\]
so that $\sigma^{}_{n} \in \cH^{\perp}_{\mathsf{pp}}$,
while $U\sigma^{}_{n} = \sigma^{}_{n+1}$. The smallest $U$-invariant
subspace of $\cH$ that contains $\sigma^{}_{n}$ is thus the cyclic
space
\[
    \cC (\sigma^{}_{n}) =  \bigoplus_{m\in\ZZ} \CC \, \sigma^{}_{m}\ts .
\]

For any $m\in\ZZ$, the spectral measure of $\sigma^{}_{m}$ is given by
\begin{equation} \label{es:spec-meas}
   \langle \sigma^{}_{m} \mid U^{n} \sigma^{}_{m} \rangle =
   \! \int_{\XX} (w_{m} + w_{m+1})
   (w_{m+n} + w_{m+n+1}) \dd \mu (w) =
   \delta^{}_{n,0} - \frac{1}{2} 
   \bigl( \delta^{}_{n,2} + \delta^{}_{n,-2} \bigr),
\end{equation}
which is a positive definite function on $\ZZ$. In its calculation,
we have used
\[
    \int_{\XX} w_{m} w_{m+k} \dd \mu (w) = \begin{cases}
    1, & \text{if } k=0, \\ -\frac{1}{2}, & \text{if }
    \lvert k \rvert = 1, \\ 0, & \text{if } \lvert k \rvert \ge 2 ,
    \end{cases}
\]
which follows by an argument used before for the determination of the
autocorrelation coefficients $\eta (m)$ in Lemma~\ref{lem:dimer-auto}.

By the Herglotz-Bochner theorem, the positive definite function $n
\mapsto \langle \sigma^{}_{m} \mid U^{n} \sigma^{}_{m} \rangle$ is the
Fourier transform of a positive measure, which means that
\[
   \langle \sigma^{}_{m} \mid U^{n} \sigma^{}_{m} \rangle =
   \int_{0}^{1} e^{-2 \pi i n x} \dd \nu^{}_{m} (x)
\]
for some positive measure $\nu^{}_{m}$ on the unit circle $\SSS$, 
represented here by the unit interval (with periodic boundary 
conditions).  By routine calculation, one finds
\begin{equation} \label{eq:pos-meas}
   \nu^{}_{m} = \lambda - \frac{1}{2} \bigl(
   e^{4 \pi i k} + e^{- 4 \pi i k} \bigr) \lambda =
   \bigl( 1 - \cos(4 \pi k) \bigr) \lambda \ts ,
\end{equation}
where the bracketed factor is the Radon-Nikodym density of
$\nu^{}_{m}$ relative to Lebesgue measure $\lambda$, written as a
function of the variable $k$ (which is equivalent to our previous
formulation of a density, as used in Proposition~\ref{prop:bal-diff}
and Eq.~\eqref{eq:new-diff}). In particular, $\nu^{}_{m}$ is
absolutely continuous (relative to $\lambda$), and does not depend on
$m$. Note that $\nu^{}_{m}$ is obtained from the diffraction measure
by doubling the argument in its Radon-Nikodym density.

This is one of countably many mutually orthogonal cyclic spaces with
absolutely continuous spectral measures, as once again follows from
the underlying Bernoulli structure. Together with the two eigenvalues
and the absence of a singular continuous part, we have established
the following result.

\begin{prop}\label{prop:dms-dynamical}
  The dynamical spectrum of the DMS under $\ZZ$-action is a mixture of
  a pure point part and an absolutely continuous one. The eigenvalues
  are\/ $\pm 1$, while the remainder is of countable Lebesgue type.
  \qed
\end{prop}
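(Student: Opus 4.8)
The plan is to assemble Proposition~\ref{prop:dms-dynamical} from the spectral facts that have already been put in place in the preceding paragraphs, organising them into a clean proof of the orthogonal decomposition $\cH = \cH_{\mathsf{pp}} \oplus \cH_{\mathsf{ac}}$. First I would record the pure point part: the functions $\varphi\equiv 1$ and $\psi$ from Eq.~\eqref{eq:eigenfun} are eigenfunctions of $U$ with eigenvalues $+1$ and $-1$ respectively, they are orthonormal (using $\mu(\XX_{+})=\mu(\XX_{-})=\tfrac12$), and by passing to the double shift $S^{2}$ one shows that these exhaust the pure point spectrum. The key observation here is that $S^{2}$ leaves $\XX_{+}$, $\XX_{-}$ and $\XX_{0}$ invariant, that $\mu$ restricted to $\XX_{+}$ (and to $\XX_{-}$) is a Bernoulli measure, and that the coordinates $w_{n}$ and $w_{n+2}$ are independent; hence $S^{2}$ acts as (the average of two) Bernoulli shifts, so its only eigenvalue is $1$ with the two-dimensional eigenspace spanned by $\mathbf{1}_{+}$ and $\mathbf{1}_{-}$, and it has no singular continuous part. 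Lifting back via the spectral theorem (an eigenvalue $\lambda$ of $S$ squares to an eigenvalue of $S^{2}$, and a singular continuous component for $S$ would push down to one for $S^{2}$), this forces the eigenvalues of $S$ to be exactly $\pm 1$ and rules out any singular continuous component for $U$.

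Next I would treat the absolutely continuous part. The orthogonal complement $\cH^{\perp}_{\mathsf{pp}} = \cH_{\mathsf{c}}$ is $U$-invariant and, by the previous paragraph, equals $\cH_{\mathsf{ac}}$; it remains to identify the spectral type. For this I use the vectors $\sigma^{}_{n}(w)=w_{n}+w_{n+1}$, which lie in $\cH^{\perp}_{\mathsf{pp}}$ (the inner products with $\varphi$ and $\psi$ vanish) and satisfy $U\sigma^{}_{n}=\sigma^{}_{n+1}$. Each cyclic subspace $\cC(\sigma^{}_{n})$ has spectral measure $\nu^{}_{m}=(1-\cos(4\pi k))\lambda$ by Eqs.~\eqref{es:spec-meas} and~\eqref{eq:pos-meas}, hence absolutely continuous of uniform multiplicity one on its support. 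Because the model is a mixture of two Bernoulli shifts, one can produce countably many mutually orthogonal such cyclic subspaces (e.g. by taking suitable finite-range local functions of the $w_{j}$ with all point-spectral and lower-order components projected out), whose direct sum is all of $\cH_{\mathsf{ac}}$; the accumulation of countably many copies of an absolutely continuous measure of multiplicity one gives spectrum of countable Lebesgue type. This is essentially the standard description of the Koopman operator of a Bernoulli system restricted to the orthocomplement of the constants, adapted here to the two-component structure.

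The step I expect to be the genuine obstacle — and the one I would spend most care on — is the claim that $\cH_{\mathsf{c}}$ is \emph{exhausted} by countably many absolutely continuous cyclic subspaces of multiplicity one, i.e.\ that there is no higher (or uncountable) multiplicity and in particular no hidden singular continuous piece. The cleanest way around this is not to build an explicit basis of cyclic vectors by hand, but to invoke the structure established via $S^{2}$: since $(\XX,\mu,S^{2})$ is, up to the two-point factor coming from $\{\XX_{+},\XX_{-}\}$, a pair of Bernoulli shifts, its Koopman operator on the complement of its eigenfunctions has countable Lebesgue spectrum; decomposing $\cH$ into the $+1$ and $-1$ eigenspaces of the (commuting) operator $U$ restricted to the $S^2$-invariant pieces and matching multiplicities shows the same holds for $S$. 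So the plan is: prove the pure point part and the absence of singular continuous spectrum via $S^{2}$, then transfer the countable Lebesgue description of $\cH_{\mathsf{ac}}$ from the Bernoulli factor back to $U$, using the $\sigma^{}_{n}$ computation only as the concrete witness that the continuous spectral measures are indeed absolutely continuous with the stated density.
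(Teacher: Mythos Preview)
Your proposal is correct and follows essentially the same route as the paper: identify $\varphi$ and $\psi$ as the eigenfunctions for $\pm 1$, pass to $S^{2}$ and use the underlying Bernoulli (one-dependent) structure to conclude that $1$ is the only eigenvalue of $S^{2}$ with no singular continuous part, then lift back to $S$ via the spectral theorem, and finally exhibit the cyclic vectors $\sigma^{}_{n}$ with their absolutely continuous spectral measure $(1-\cos(4\pi k))\lambda$ while appealing to the Bernoulli structure for the countable Lebesgue type. The only noteworthy difference is that you are more explicit than the paper about the mechanism of transferring spectral type from $S^{2}$ back to $S$ (eigenvalues square, and the pushforward under the doubling map preserves the Lebesgue decomposition), which the paper leaves implicit under ``the spectral theorem now tells us''.
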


So far, we have formulated the spectrum `naively', without any
reference to harmonic analysis and duality. It is more systematic
to include the dual group to $\ZZ$ into the picture, which is the
unit circle $\SSS$, conveniently represented by the half-open interval
$[0,1)$ together with addition modulo $1$. Then, one sees (via
the elements of the dual group as characters on $\ZZ$) that our
eigenvalues $1$ and $-1$ correspond to the elements $0$ and 
$\frac{1}{2}$ of the unit circle, with $\frac{1}{2}$ being the
non-trivial contribution.

This point of view is particularly useful when we suspend the
$\ZZ$-action into the continuous translation action of the group
$\RR$, as given by $\omega \mapsto \delta_{t} * \omega$ with
$t\in\RR$. Now, the dual group is $\RR$ (since $\RR$ is self-dual),
and our dynamical spectrum becomes $\ZZ/2$.  This is the natural
formulation for the comparison with the (support of the) diffraction
spectrum. The pure point part of the dynamical spectrum is non-trivial
because it is $\ZZ/2$ rather than $\ZZ$.

At this point, we note that the relation between the dynamical and the
diffraction spectrum is reminiscent of the situation for the
Thue-Morse sequence. In both cases, there exists some non-trivial
point spectrum that is not reflected in the diffraction measure of the
sequence. However, the missing spectral part of the Thue-Morse
sequence is retrieved via the period doubling sequence, which (as a
dynamical system) can be viewed as a factor the the Thue-Morse system.
We will now demonstrate that the analogous situation is also met for
our new example.

\section{A factor system and its diffraction}

Let us define a mapping $ \phi \! : \, \XX \longrightarrow \{ \pm 1
\}^{\ZZ}$ via $w \mapsto \phi(w)$ with 
\[
    \phi(w)^{}_{n} = -\ts w^{}_{n} w^{}_{n+1} \ts ,
\] 
which is continuous. This particular mapping is inspired by the
analogous situation for the Thue-Morse sequence, and indeed has
similar consequences here.  The image set, $\YY = \phi(\XX)$, is again
compact, and $(\YY, S)$ is another topological dynamical system. It is
clear that $\phi(-w) = \phi(w)$, where $(-w)_{n} = -w_{n}$, and a
moments reflection shows that this is the only ambiguity, so that
$\phi \! : \, \XX \longrightarrow \YY$ is a globally two-to-one
surjection. In particular, $\YY_{0} = \phi (\XX_{0}) = \{ \ldots 11 |
11 \ldots \}$, and the entire image shift space is
\[
     \YY = \big\{ v \in \{ \pm 1 \}^{\ZZ} \mid
     v_{n} = 1 \text{ for all } n \in 2 \ZZ
     \text{ or for all } n \in 2 \ZZ + 1 \big\} .
\]
Moreover, $\phi$ makes the diagram
\begin{equation} \label{eq:diagram}
   \begin{CD}
    \XX   @>S>> \XX \\
    @V\phi VV @VV\phi V \\
    \YY   @>S>> \YY
   \end{CD}
\end{equation}
commutative. Consequently, $(\YY, S)$ is a (topological) factor
\cite{DGS} of the dynamical system $(\XX, S)$. In our setting, the
measure $\mu$ on $\XX$ induces a measure $\nu$ on $\YY$ via $\nu (A) =
(\phi . \mu) (A) := \mu ( \phi^{-1} (A))$ for Borel sets $A$.  We may
thus also consider the dynamical system $(\YY,\cB^{}_{\YY},\nu)$,
which is then a measure-theoretic factor of $(\XX,\cB^{}_{\XX},\mu)$;
see \cite[Sec.~3]{BL-2} for a summary of factors and their spectral
properties.

Let us first look at diffraction, for a typical element $v\in\YY$. 

\begin{lemma} \label{lem:factor-auto} 
  The autocorrelation coefficients of $v\in\YY$ are $\nu$-almost
  surely given by $\eta(0)=1$, $\eta(2n) = \frac{1}{2}$ for all
  $n\in\ZZ \setminus \{0\}$, and $\eta(2m+1)=0$ for all $m\in\ZZ$.
\end{lemma}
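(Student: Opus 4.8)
The plan is to exploit the structure of $\YY$ directly, using the same strong-law-of-large-numbers strategy as in Lemma~\ref{lem:dimer-auto}. Recall that a typical $v\in\YY$ has $v_{n}=1$ for all $n$ in one fixed parity class (say the even positions), while on the other parity class the entries are i.i.d.\ $\pm 1$ with equal probability; this is because $\nu=\phi.\mu$ and, for a typical $w\in\XX_{+}$ (the even case), $\phi(w)_{n}=-w_{n}w_{n+1}$ evaluates to $+1$ exactly at the dimer interiors (which, for even sequences, sit at the even $n$) and to $\pm 1$ with equal probability at the dimer boundaries (the odd $n$), these boundary signs being independent across distinct dimers. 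The case of odd $v$ is symmetric and occurs with probability $\tfrac12$, but by stationarity both cases give the same autocorrelation, so it suffices to argue for one of them. I would state this description of the typical $v$ as the first step, citing the proof of Lemma~\ref{lem:dimer-auto} for the independence claims.

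Next I would compute $\eta(n)=\lim_{N\to\infty}\frac{1}{2N+1}\sum_{m=-N}^{N}\overline{v_{m}}\,v_{m+n}$ case by case, always assuming (without loss of generality) that the even positions are the constant-$1$ ones. For $n=0$ we trivially get $\eta(0)=1$. For $n=2k$ with $k\neq 0$, the product $v_{m}v_{m+2k}$ equals $1$ whenever $m$ is even (both factors are $1$), and for $m$ odd it is a product of two independent $\pm 1$ variables, hence $\pm 1$ with equal probability and independent across a suitable splitting of the index set; the SLLN then kills the odd-$m$ contribution and leaves $\eta(2k)=\tfrac12$. For $n=2m+1$ odd, every term $v_{j}v_{j+n}$ pairs an even index with an odd index, so exactly one factor is the constant $1$ and the other is a fair $\pm 1$; again an SLLN argument (splitting into finitely many sums over i.i.d.\ Bernoulli-type variables as in Lemma~\ref{lem:dimer-auto}) shows the whole sum vanishes almost surely, giving $\eta(2m+1)=0$.

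The main technical care—though not really an obstacle—is the same one flagged in Lemma~\ref{lem:dimer-auto}: the relevant random variables are only \emph{one-dependent} rather than fully independent (the boundary sign of one dimer is independent of that of any other, but the decomposition of the index set must be chosen so that within each of the finitely many subsums one has genuine i.i.d.\ or at least pairwise independent summands). I would invoke the version of the SLLN under pairwise independence (as cited via \cite{Ete} earlier) to cover this, exactly as in the earlier lemma. Finally, I would note that $\eta(-n)=\eta(n)$ holds whenever the limits exist (the defining sum is symmetric in the obvious way), so the computation for $n\in\NN$ suffices, and that all statements hold $\nu$-almost surely because the process on $\YY$ is stationary and ergodic (being a factor of the ergodic DMS). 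This yields the claimed values of $\eta$ and completes the proof.
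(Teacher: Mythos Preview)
Your proposal is correct and follows essentially the same approach as the paper's proof: describe the structure of a typical $v\in\YY$ (constant $1$ on one parity class, fair $\pm 1$ on the other) and then apply the SLLN case by case. You are in fact slightly more careful than the paper, which calls the random entries an i.i.d.\ family when they are really only pairwise independent (one-dependent, being products $\epsilon_{j}\epsilon_{j+1}$ of consecutive i.i.d.\ dimer signs); your invocation of Etemadi's SLLN under pairwise independence handles this correctly.
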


\begin{proof}
  The mapping $\phi$ has the effect that a typical $v\in\YY$ has
  weights $1$ on every second position, and weights $\pm 1$ with equal
  probability on all remaining positions. The latter form an i.i.d.\
  family of random variables, so that an application of the SLLN, in
  the same spirit as used before, gives the formula for $\eta$.
\end{proof}

\begin{prop}\label{prop:factor-diff}
  Autocorrelation and diffraction of\/ $\YY$ are given by
\[
   \gamma \, = \, \frac{1}{2} \, \delta^{}_{0} +
   \frac{1}{2} \, \delta^{}_{2\ZZ}
   \quad \text{and} \quad
   \widehat{\gamma} \, = \, \frac{1}{2} \, \lambda +
   \frac{1}{4} \, \delta^{}_{\ZZ/2} \ts ,
\]
  which apply to almost all realisations of the underlying process.

  In particular, the diffraction is of mixed type, with a
  non-trivial pure point component.
\end{prop}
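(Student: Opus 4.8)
The plan is to compute the autocorrelation $\gamma$ directly from the coefficients $\eta(n)$ supplied by Lemma~\ref{lem:factor-auto}, and then to obtain $\widehat{\gamma}$ by Fourier transform, using the Poisson summation formula for lattice Dirac combs in the same way as in Section~\ref{sec:toy}. Since Lemma~\ref{lem:factor-auto} gives $\eta(0)=1$, $\eta(2n)=\tfrac12$ for $n\neq 0$ and $\eta(2m+1)=0$, I would write
\[
   \gamma \, = \sum_{n\in\ZZ}\eta(n)\,\delta^{}_{n}
   \, = \, \tfrac12\,\delta^{}_{0} + \tfrac12\sum_{n\in\ZZ}\delta^{}_{2n}
   \, = \, \tfrac12\,\delta^{}_{0} + \tfrac12\,\delta^{}_{2\ZZ}\ts ,
\]
where the isolated $\tfrac12\,\delta^{}_{0}$ term accounts for the difference between $\eta(0)=1$ and the value $\tfrac12$ coming from the $\delta^{}_{2\ZZ}$ part. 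This is precisely the claimed form of $\gamma$, and it holds $\nu$-almost surely because Lemma~\ref{lem:factor-auto} does.

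Next I would Fourier transform term by term. The first summand is trivial: $\widehat{\delta^{}_{0}}=\lambda$. For the second, the lattice $\varGamma = 2\ZZ$ has density $\tfrac12$ and dual lattice $\varGamma^{*} = \tfrac12\ZZ = \ZZ/2$, so the Poisson summation formula quoted in Section~\ref{sec:toy}, namely $\widehat{\delta^{}_{\varGamma}} = \mathrm{dens}(\varGamma)\,\delta^{}_{\varGamma^{*}}$, gives $\widehat{\delta^{}_{2\ZZ}} = \tfrac12\,\delta^{}_{\ZZ/2}$. Combining,
\[
   \widehat{\gamma} \, = \, \tfrac12\,\lambda
   + \tfrac12\cdot\tfrac12\,\delta^{}_{\ZZ/2}
   \, = \, \tfrac12\,\lambda + \tfrac14\,\delta^{}_{\ZZ/2}\ts ,
\]
as claimed. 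The mixed-type statement is then immediate: the term $\tfrac12\,\lambda$ is absolutely continuous with respect to Lebesgue measure, while $\tfrac14\,\delta^{}_{\ZZ/2}$ is a nonzero pure point measure supported on $\ZZ/2$, so the Lebesgue decomposition \eqref{eq:decomp} of $\widehat{\gamma}$ has a non-trivial pure point part and a non-trivial absolutely continuous part (and no singular continuous part).

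There is essentially no serious obstacle here, since the analytic content has already been isolated in Lemma~\ref{lem:factor-auto}; the only point that requires a word of care is the interchange of Fourier transform with the infinite sum defining $\delta^{}_{2\ZZ}$, but this is exactly the content of the Poisson summation formula for lattice Dirac combs already invoked in Section~\ref{sec:toy} (and justified in \cite{BM}), so it may simply be cited. One should also note, as in the earlier sections, that the identity is to be read as an equality of tempered measures on $\RR$, and that it holds for $\nu$-almost every $v\in\YY$ because the underlying SLLN argument in Lemma~\ref{lem:factor-auto} does. It is worth remarking, for the comparison with the dynamical spectrum carried out below, that the pure point part of $\widehat{\gamma}$ is now supported on all of $\ZZ/2$ — in particular it contains the point $\tfrac12$ that was invisible in the diffraction of the original DMS in Eq.~\eqref{eq:new-diff} — which is the whole point of passing to the factor $\YY$.
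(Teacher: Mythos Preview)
Your proof is correct and follows essentially the same approach as the paper's own proof, which simply invokes Lemma~\ref{lem:factor-auto} for $\gamma$ and then applies $\widehat{\delta^{}_{0}}=\lambda$ together with the Poisson summation formula for $\delta^{}_{2\ZZ}$. Your version merely spells out the computation in more detail and adds some interpretive remarks; there is no substantive difference in method.
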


\begin{proof}
  The claim on $\gamma$ is clear from Lemma~\ref{lem:factor-auto},
  while its Fourier transform follows from $\widehat{\delta_{0}} =
  \lambda$ together with an application of the Poisson summation
  formula to $\delta^{}_{2\ZZ}$.
\end{proof}

As before, this is the result for weights $\pm 1$. Since they are no
longer balanced, the calculation of the diffraction formula for
general weights needs one extra step. Observing that
\[
   \lim_{N\to\infty} \frac{1}{2N \nts +1}
   \sum_{n=-N}^{N} v^{}_{n} \, = \, \frac{1}{2}
\]
holds for $\nu$-almost all $v\in\YY$, one easily derives the
Eberlein convolutions
\[
    \delta^{}_{\ZZ} \circledast \widetilde{\omega} \, = \, 
    \frac{1}{2} \, \delta^{}_{\ZZ}
    \quad \text{and} \quad
    \omega \circledast \widetilde{\delta^{}_{\ZZ}} \, = \, 
    \frac{1}{2} \, \delta^{}_{\ZZ} \ts ,
\]
which apply $\nu$-almost surely.  Since the general Dirac comb
$\omega^{}_{h}$ again satisfies Eq.~\eqref{eq:new-Dirac}, a simple
calculation results in
\[
    \gamma^{}_{h} \, = \, 
    \frac{\lvert h_{+} \nts + h_{-}\rvert^{2} + \lvert h_{+} \rvert^{2} 
    - \lvert h_{-} \rvert^{2}}{4} \, \delta^{}_{\ZZ}
    + \frac{\lvert h_{+} \nts - h_{-}\rvert^{2}}{4} \, \gamma
\]
and thus in the general diffraction formula
\begin{equation} \label{eq:new-gen-diff}
    \widehat{\gamma^{}_{h}} \, = \,
    \frac{\lvert h_{+} \nts + h_{-}\rvert^{2} + \lvert h_{+} \rvert^{2} 
    - \lvert h_{-} \rvert^{2}}{4} \, \delta^{}_{\ZZ} 
    + \frac{\lvert h_{+} \nts - h_{-}\rvert^{2}}{16} \, \delta^{}_{\ZZ/2}
    + \frac{\lvert h_{+} \nts - h_{-}\rvert^{2}}{8} \, \lambda \ts ,
\end{equation}
by an application of Proposition~\ref{prop:factor-diff}.  The
intensity of any point measure $\delta_{k}$ with $k\in\ZZ$ is thus
given by $\lvert \frac{3}{4} h_{+} + \frac{1}{4} h_{-} \rvert^{2}$,
which is the absolute square of the average weight (or scattering
strength) in this case, as it must. Eq.~\eqref{eq:new-gen-diff}
displays a non-trivial pure point component (namely the one with
support $\ZZ/2$) that `recovers' the missing part from our original 
dynamical system $\bigl( \XX, \cB^{}_{\XX}, \mu\bigr)$.

The dynamical spectrum of $\bigl( \YY, \cB^{}_{\YY}, \nu\bigr)$ is the
same as that of $\bigl( \XX, \cB^{}_{\XX}, \mu\bigr)$. Given an element
$v\in\YY$, our previous eigenfunction $g$ takes the same value on the
two pre-images in $\phi^{-1} (v)$, so that we once again have an
eigenfunction for the eigenvalue $-1$, in addition to the constant
function (for the eigenvalue $1$). The Bernoulli substructure is
responsible for a continuous spectrum of countable Lebesgue type.
Note that a factor system need not have the same dynamical spectrum 
as the original system, as the example of the Thue-Morse sequence 
versus the period doubling sequence demonstrates.

\begin{remark}\label{rem:simpler}
  There is another very simple (albeit somewhat degenerate)
  possibility to define a factor. Recall that $\XX_{0} = \{ u^{}_{+},
  u^{}_{-} \}$ with $u^{}_{+} = \ldots +- | +- \ldots$ and $u^{}_{-} =
  Su^{}_{+}$. Now, define the mapping $\psi \! : \, \XX
  \longrightarrow \XX_{0}$ by
\[
    w \,\longmapsto\, \psi (w) := \begin{cases}
    w , &  \text{if $w\in \XX_{0}$}, \\
    u^{}_{\pm} , &  \text{if $w\in \XX_{\pm}$} ,
    \end{cases} 
\]
which is a continuous surjection and shows that our toy system of
Section~\ref{sec:toy} is a factor of the DMS. Here, the factor
has pure point spectrum. The diffraction spectrum is $\ZZ/2$,
which exhausts the dynamical spectrum of the DMS.
\end{remark}

Our observations up to this point can be summarised as follows.
\begin{theorem}
   The diffraction measure of the DMS system with balanced weights is
   purely absolutely continuous, as stated in
   Proposition~$\ref{prop:bal-diff}$.  The case of general weights can
   only add the trivial pure point part, as given in
   Eq.~\eqref{eq:new-diff}.
    
    The dynamical spectrum of the DMS system under the action of\/
    $\RR$ consists of the pure point part\/ $\ZZ/2$ together with a
    countable Lebesgue spectrum.
    
    The non-trivial part\/ $(\ZZ/2)\setminus \ZZ$ of the dynamical
    point spectrum is not reflected by the diffraction spectrum of the
    DMS system, but can be recovered via the diffraction spectrum of a
    suitably chosen factor of it, either via the toy model of
    Section~$\ref{sec:toy}$ and Remark~$\ref{rem:simpler}$ or via the
    sytem $\YY$ from Proposition~$\ref{prop:factor-diff}$.
\end{theorem}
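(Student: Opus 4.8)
The plan is to assemble this theorem as a direct corollary of the results already established, since every assertion it makes is a packaging of earlier propositions. First I would dispose of the two diffraction statements: the claim that the balanced-weight diffraction of the DMS is purely absolutely continuous is literally Proposition~\ref{prop:bal-diff}, and the claim that general weights add only the trivial pure point part supported on $\ZZ$ is Eq.~\eqref{eq:new-diff}, where the pure point component $\frac{\lvert h_{+}+h_{-}\rvert^{2}}{4}\,\delta^{}_{\ZZ}$ merely records the lattice support $\ZZ$ of $\omega^{}_{h}$ and carries no further structural information. So this part is a one-line citation.

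Next I would address the dynamical spectrum under the $\RR$-action. Here the natural route is to invoke Proposition~\ref{prop:dms-dynamical}, which gives the dynamical spectrum of the DMS under the $\ZZ$-action as the two eigenvalues $\pm 1$ together with a countable Lebesgue part, and then pass to the suspension. As explained in the paragraph preceding the theorem, under the $\RR$-action the dual group becomes $\RR$ (self-dual), and the two characters $1$ and $-1$ of $\ZZ$ lift to the points $0$ and $\tfrac{1}{2}$ of $\RR/\ZZ$, so that the pure point part of the dynamical spectrum is exactly the group $\ZZ/2$; the Lebesgue part persists as a countable Lebesgue spectrum under suspension. The one point that deserves an explicit sentence is that the suspension is the trivial one described in the Remark in Section~\ref{sec:toy}, so no new spectral contributions beyond $\ZZ/2$ and the countable Lebesgue part appear — this is where one must be slightly careful, as suspensions can in principle enlarge the spectrum, but here the suspension only contributes the circle structure already built into $\RR/\ZZ\simeq\SSS$.

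For the final (and most substantive) assertion, I would argue in two halves. For the negative half — that $(\ZZ/2)\setminus\ZZ$ is not seen in the diffraction of the DMS — one simply compares the support of $\widehat{\gamma^{}_{h}}$ in Eq.~\eqref{eq:new-diff}, whose pure point part lives on $\ZZ$, with the dynamical point spectrum $\ZZ/2$; the element $\tfrac{1}{2}$ is absent from the diffraction for every choice of weights $h^{}_{\pm}$. For the positive half — that it is recovered by a factor — I would exhibit the two factors already constructed: the system $\YY$ of Proposition~\ref{prop:factor-diff}, whose diffraction $\widehat{\gamma}=\tfrac{1}{2}\lambda+\tfrac{1}{4}\,\delta^{}_{\ZZ/2}$ (or Eq.~\eqref{eq:new-gen-diff} for general weights) has a pure point component on all of $\ZZ/2$; and the toy model $\XX_{0}$ of Section~\ref{sec:toy} together with Remark~\ref{rem:simpler}, whose diffraction is $\delta^{}_{\ZZ+\frac{1}{2}}$ (for $h^{}_{\pm}=\pm 1$), again containing the point $\tfrac{1}{2}$. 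In both cases one invokes the commuting diagram~\eqref{eq:diagram} (respectively the surjection $\psi$) to confirm these are genuine factors. I do not anticipate a real obstacle here; the only thing requiring mild care is making sure the phrase ``recovered via the diffraction of a factor'' is justified at the level of the point-spectrum support rather than merely asserted — but this follows at once by reading off the supports in Propositions~\ref{prop:factor-diff} and the formula of Section~\ref{sec:toy}, so the proof is essentially a guided tour of the preceding sections and can be written in a short paragraph.
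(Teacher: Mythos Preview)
Your proposal is correct and follows essentially the same route as the paper's own proof: the diffraction claims are handled by citing Proposition~\ref{prop:bal-diff} and Eq.~\eqref{eq:new-diff}, the $\RR$-action dynamical spectrum is obtained from Proposition~\ref{prop:dms-dynamical} via the standard suspension (extending $\{\tfrac{1}{2},1\}$ to the generated group $\ZZ/2$), and the recovery of the missing point spectrum is read off from the factor diffraction formulas already established. Your write-up is more detailed than the paper's very terse proof, but the logical structure is identical.
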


\begin{proof}
Most claims are clear from the previous propositions. The dynamical
spectrum for the action of $\RR$, which is written via the generating
elements (so that $\frac{1}{2}$ corresponds to an eigenvalue $-1$ as
explained earlier), follows from the spectrum for the $\ZZ$-action of
the shift in Proposition~\ref{prop:dms-dynamical} via a standard
suspension. Since we deal with a realisation of the system as a set of
Dirac combs with support $\ZZ$, this only extends the set $\{
\frac{1}{2}, 1 \}$ to the group generated by it, which is $\ZZ/2$.
\end{proof}

Up to this point, we can observe that the diffraction of $\XX$ does
not reflect the full dynamical spectrum of $\XX$, while the
diffraction of $\XX$ together with that of its factor $\YY$ does.

The analogous phenomenon appears in the case of the Thue-Morse system
\cite{EM,BG-TM}, where the dyadic rationals in the dynamical spectrum
\cite{Q} are only recovered via the diffraction spectrum of the period
doubling system, which is again a factor with pure point diffraction
(in fact, it can be described as a $2$-adic model set
\cite{BMS,BM,BG-PD}).  Let us thus look at this situation from a more
general point of view.

\section{General observations and outlook}

Here and below, $\XX$ is a compact dynamical system of (possibly
weighted) Dirac combs on $\ZZ^{d}$ or of translation bounded measures
on $\RR^{d}$, with ergodic invariant measure $\mu$ under the action of
the translation group $\ZZ^{d}$ or $\RR^{d}$. Let $\YY$ be a factor of
$\XX$, with factor map $\phi$ and induced measure $\nu$.  In
particular, we assume that diagram \eqref{eq:diagram} is again
commutative, with $S$ replaced by any generator of our translation
group.

If $g$ is an eigenfunction in $\mathrm{L}^{2} (\YY,\nu)$, it is clear
that $g \circ \phi \in \mathrm{L}^{2} (\XX,\mu)$, and the commutativity  
of the diagram \eqref{eq:diagram} implies that the latter is again
an eigenfunction, with the same eigenvalue (or set of eigenvalues, if
$d>1$).

\begin{fact}
   The dynamical eigenvalues of the factor system $\YY$ form a subset
   of those of the original system $\XX$.     \qed
\end{fact}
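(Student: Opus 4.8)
The plan is to verify the two claims that make up the \textbf{Fact}: first that $g\circ\phi$ lies in $\mathrm{L}^{2}(\XX,\mu)$ whenever $g\in\mathrm{L}^{2}(\YY,\nu)$, and second that if $g$ is an eigenfunction of the $\YY$-action, then $g\circ\phi$ is an eigenfunction of the $\XX$-action with the same eigenvalue. The first point is just a change of variables: since $\nu=\phi.\mu$ is by definition the push-forward of $\mu$ under $\phi$, one has $\int_{\XX}\lvert g\circ\phi\rvert^{2}\dd\mu=\int_{\YY}\lvert g\rvert^{2}\dd\nu<\infty$, so $g\circ\phi\in\mathrm{L}^{2}(\XX,\mu)$ and in fact $g\mapsto g\circ\phi$ is a linear isometry of $\mathrm{L}^{2}(\YY,\nu)$ into $\mathrm{L}^{2}(\XX,\mu)$ intertwining the two Koopman operators.

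For the eigenfunction claim, I would work with the Koopman operators: write $U^{}_{\XX}$ for the unitary on $\mathrm{L}^{2}(\XX,\mu)$ induced by a chosen generator $T$ of the translation group (the shift $S$ in the $\ZZ^{d}$ case, or a one-parameter subgroup in the $\RR^{d}$ case), and $U^{}_{\YY}$ for the corresponding unitary on $\mathrm{L}^{2}(\YY,\nu)$. Commutativity of diagram \eqref{eq:diagram}, i.e.\ $\phi\circ T=T\circ\phi$, gives $(U^{}_{\XX}(g\circ\phi))(w)=g(\phi(Tw))=g(T\phi(w))=(U^{}_{\YY}g)(\phi(w))$, that is $U^{}_{\XX}(g\circ\phi)=(U^{}_{\YY}g)\circ\phi$. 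Hence if $U^{}_{\YY}g=\lambda\ts g$, then $U^{}_{\XX}(g\circ\phi)=\lambda\ts(g\circ\phi)$; and $g\circ\phi$ is not the zero element of $\mathrm{L}^{2}(\XX,\mu)$ because the isometry above preserves norms and $g\ne 0$. In the case $d>1$ one runs the same argument simultaneously for each of the $d$ commuting generators, so the full tuple of eigenvalues (the character of the translation group associated to $g$) is inherited by $g\circ\phi$.

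I do not expect any genuine obstacle here: the statement is a soft, general fact about factors of measure-preserving systems, and every ingredient (the push-forward identity defining $\nu$, the commutativity of \eqref{eq:diagram}, unitarity of the Koopman operators) has already been set up in the excerpt. The only point that needs a word of care is the passage from "eigenfunction on $\YY$" to "nonzero element of $\mathrm{L}^{2}(\XX,\mu)$", which is handled by the isometry remark, and—in the multidimensional case—the observation that an eigenfunction for a $\ZZ^{d}$- or $\RR^{d}$-action means a simultaneous eigenfunction for all generators, so one should phrase the conclusion in terms of the associated group character rather than a single eigenvalue. With those remarks in place the proof is a two-line computation, and the \texttt{qed} in the statement is justified.
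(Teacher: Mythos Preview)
Your proposal is correct and follows precisely the approach the paper indicates in the sentence immediately preceding the \textbf{Fact}: the paper simply asserts that $g\circ\phi\in\mathrm{L}^{2}(\XX,\mu)$ and that commutativity of diagram~\eqref{eq:diagram} makes $g\circ\phi$ an eigenfunction with the same eigenvalue(s), leaving the details implicit. Your write-up just fills in those details (the push-forward identity for the $\mathrm{L}^{2}$-membership, the intertwining relation $U^{}_{\XX}(g\circ\phi)=(U^{}_{\YY}g)\circ\phi$, and the isometry argument ensuring $g\circ\phi\neq 0$), so there is nothing to add.
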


This is one ingredient for the following result; see
\cite{DGS,BL-2} for more.
\begin{prop}
   If\/ $\XX$ has pure point dynamical spectrum, then so does $\YY$.
   \qed
\end{prop}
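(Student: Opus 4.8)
The plan is to realise $\mathrm{L}^{2}(\YY,\nu)$ as a closed subspace of $\mathrm{L}^{2}(\XX,\mu)$ and then to use the elementary fact that a pure point unitary action, restricted to a reducing subspace, stays pure point. First I would consider the pullback map $\phi^{*}\!:\, g\mapsto g\circ\phi$. Since $\nu = \phi.\mu$, one has $\int_{\XX}\lvert g\circ\phi\rvert^{2}\dd\mu = \int_{\YY}\lvert g\rvert^{2}\dd\nu$, so $\phi^{*}$ is an isometry (in particular injective) from $\mathrm{L}^{2}(\YY,\nu)$ into $\cH := \mathrm{L}^{2}(\XX,\mu)$. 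Write $\cH_{\YY}$ for its image, a closed subspace of $\cH$.

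Next I would observe that $\cH_{\YY}$ is a reducing subspace for the translation action on $\cH$. Reading the commutative diagram \eqref{eq:diagram} with $S$ replaced by any element $t$ of the acting group gives $U_{t}(g\circ\phi) = (U^{\YY}_{t}g)\circ\phi$, where $U_{t}$ and $U^{\YY}_{t}$ denote the unitaries on $\cH$ and on $\mathrm{L}^{2}(\YY,\nu)$, respectively. Because $U^{\YY}_{t}$ is unitary, both $U_{t}\cH_{\YY}\subseteq\cH_{\YY}$ and $(U_{t})^{-1}\cH_{\YY}\subseteq\cH_{\YY}$, so $\cH_{\YY}$ is invariant under the $U_{t}$ together with their inverses (equivalently, adjoints). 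Hence the orthogonal projection onto $\cH_{\YY}$ commutes with every $U_{t}$, thus with the whole von Neumann algebra they generate, and therefore with the joint spectral projections $P_{\chi}$ of the action onto the character-$\chi$ eigenspaces $\cH_{\chi}\subseteq\cH$.

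Now I would invoke the hypothesis: pure point dynamical spectrum of $\XX$ means $\sum_{\chi}P_{\chi} = \mathrm{Id}_{\cH}$, the sum running over the (at most countable) set of eigenvalues and converging strongly. Applying this on $\cH_{\YY}$ and using $P_{\chi}\cH_{\YY}\subseteq\cH_{\YY}$ yields the orthogonal decomposition $\cH_{\YY} = \bigoplus_{\chi}(\cH_{\chi}\cap\cH_{\YY})$. Finally, an element $g\circ\phi\in\cH_{\chi}$ satisfies $(U^{\YY}_{t}g)\circ\phi = U_{t}(g\circ\phi) = \chi(t)\,(g\circ\phi)$ for all $t$, and injectivity of $\phi^{*}$ forces $U^{\YY}_{t}g = \chi(t)\,g$ for $\nu$-almost every point; so $\cH_{\chi}\cap\cH_{\YY}$ corresponds under $\phi^{*}$ exactly to the joint eigenfunctions on $\YY$ with character $\chi$. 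Therefore these eigenfunctions span $\mathrm{L}^{2}(\YY,\nu)$, which is the assertion.

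The only step that needs genuine care — and the one I would single out as the main obstacle — is the passage from invariance of $\cH_{\YY}$ under the $U_{t}$ to invariance under the spectral projections $P_{\chi}$; mere one-sided invariance would not give this (witness the unilateral shift). What rescues the argument is that the pullback of $(U^{\YY}_{t})^{-1}$ again lies in $\cH_{\YY}$, so that $\cH_{\YY}$ is genuinely reducing and the spectral calculus applies. Everything else is routine bookkeeping with the factor map and the spectral theorem.
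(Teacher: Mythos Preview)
Your argument is correct. The paper does not actually prove this proposition: the $\qed$ immediately follows the statement, and the surrounding text simply records the preceding Fact (eigenvalues of $\YY$ embed into those of $\XX$) and defers to \cite{DGS,BL-2} for the rest. Your write-up is the standard argument one would supply if asked to fill in that citation: realise $\mathrm{L}^{2}(\YY,\nu)$ isometrically inside $\mathrm{L}^{2}(\XX,\mu)$ via $\phi^{*}$, check that the image is a reducing subspace for the unitary translation action (your care about two-sided invariance is well placed, though here it is automatic because the action is by a group), and then restrict the spectral decomposition. So there is nothing to compare approaches against; you have simply written out what the paper leaves to the references.
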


More generally, it seems difficult for a factor to decrease the
long-range order in the diffraction, except for the removal of the
pure point part that corresponds to the trivial eigenfunction (via the
balanced weight representation). It is also rather clear that, by
means of suitable correlation functions, one can detect each
eigenfunction in the diffraction measure of a suitable factor.  This
is known explicitly for the Thue-Morse sequence, but also for the
Rudin-Shapiro sequence, both having the dyadic rationals as the pure
point part of the dynamical spectrum \cite{Q}.  Once a factor is pure
point, further factors can only reduce to subgroups, and hence do not
contain new information on the system. This mechanism also underlies
the equivalence of diffraction and dynamical spectrum in the pure
point case.

However, it is less obvious that a factor could display a singular
continuous diffraction spectrum if the original system does not. That
this is indeed possible is once again visible from the TM
sequences. Recall that the TM hull $\XX^{}_{\mathrm{TM}}$ can be
defined via the primitive substitution $1\mapsto 1 \bar{1}$,
$\bar{1}\mapsto \bar{1} 1$ on the binary alphabet $\{ 1, \bar{1}
\}$. For any $w\in \XX^{}_{\mathrm{TM}}$, replace $1$ and $\bar{1}$ by
the weights $\frac{1}{5}$ and $\frac{7}{5}$, followed by a random and
independent choice of a sign ($+$ or $-$) for each weight. This way,
one defines an (infinite) cover of $\XX^{}_{\mathrm{TM}}$. Each
element of it has average squared scattering strength $1$, while each
typical element has vanishing $2$-point correlations. Consequently,
the diffraction measure of the covering hull is $\widehat{\gamma} =
\lambda$, which is purely absolutely continuous. The TM system, which
is a factor, has purely singular continuous diffraction (for the
balanced weight case), while the period doubling system, which is
again a factor, is pure point. So, this little example illustrates a
step-wise unravelling of the order phenomena.

Note that what we say here is more general than (and somewhat
different from) the direct discussion of dynamical versus diffraction
spectrum in a single system. Indeed, if the diffraction spectrum is
pure point, then so is the dynamical spectrum. But it is certainly
possible to have a \emph{factor} with pure point diffraction spectrum
when the original system has a dynamical spectrum with also continuous
components -- this is what the known examples demonstrate. Somehow,
the dynamical spectrum contains the information of the diffraction
spectra of all its factors. Conversely, in all known examples so far,
the diffraction spectra of a system and its factors taken together
seem to comprise the complete information on the dynamical spectrum of
the original system (even though the diffraction spectrum of each
individual factor might not be very informative at all).

The general claim is rather clear now: The dynamical spectrum is not
to be compared with the diffraction spectrum of the system alone, but
with the diffraction spectra of the system and all its factors. A more
general and precise formulation and exposition is postponed to a
forthcoming publication \cite{BEL}.

Let us close by some remarks on the relation of our findings 
to some more general issues investigated in statistical physics.
Our DMS model forms a caricature of a system where `molecules' are
ordered, but due to a disordered interior of each molecule, `atoms' do
not display long-range order.  Although we have a 1-dimensional ground
state order for the dimeric molecules, which is typical for $T=0$, and
independent disorder on the `atomic' level, which is typical for
infinite temperatures, we expect conceptually similar phenomena to be
rather widespread. In more realistic models, one should have a similar
result for appropriate Gibbs measures, which then should be
higher-dimensional. For some preliminary results on diffraction,
mixing properties and spectra of equilibrium systems (as described
by Gibbs measures), we refer to \cite{BS,Kue1,Kue2,Sl}.

\smallskip
\section*{Acknowledgements}
It is a pleasure to thank Roberto Fern\'andez, Uwe Grimm, Daniel Lenz
and Frank Redig for helpful discussions.  We thank two reviewers for a number
of very useful suggestions. This work was supported by
the German Research Council (DFG), within the CRC 701.

\smallskip

\end{document}